\newcommand{\proj}{\mathbb{P}}
\theoremstyle{plain}
\numberwithin{equation}{section}
\newtheorem{theorem}{Theorem}[section]
\newtheorem*{theorem*}{Theorem}
\newtheorem{corollary}[theorem]{Corollary}
\theoremstyle{definition}
\newtheorem{question}[theorem]{Question}
\newtheorem{definition}[theorem]{Definition}
\newtheorem{remark}[theorem]{Remark}
\newtheorem{example}[theorem]{Example}
\newtheorem{assumption}[theorem]{Assumption}
\begin{document}
	\title[Seshadri constants  of curve configurations on surfaces]{Seshadri constants  of curve configurations on surfaces}
	\author[K. Hanumanthu]{Krishna Hanumanthu}

\address{Chennai Mathematical Institute, H1 SIPCOT IT Park, Siruseri, Kelambakkam 603103,
India.}

\email{krishna@cmi.ac.in}

\author[P. K. Roy]{Praveen Kumar Roy}

\address{UM-DAE Centre for Excellence in Basic Sciences, University of Mumbai Santacruz, Mumbai 400098, India}

\email{praveen.roy@cbs.ac.in}

\author[A. Subramaniam]{Aditya Subramaniam}

\address{Indian Institute of Science Education and Research Tirupati, Rami Reddy Nagar, Karakambadi Road, Mangalam (P.O.),
Tirupati, Andhra Pradesh, INDIA – 517507.}

\email{adityasubramaniam@labs.iisertirupati.ac.in}
\subjclass[2020]{14C20,14N10,14N20}

\keywords{Seshadri constants,  transversal curve arrangements, configurational Seshadri constants}

\date{\today}

	\begin{abstract}
	Let $X$ be a complex nonsingular projective surface and let $L$ be an ample line bundle on $X$. We study multi-point Seshadri constants of $L$ at singular points of certain connected arrangements of curves on $X$. We pose some questions about such Seshadri constants and prove results in many cases, including star arrangements of curves. We also study the configurational Seshadri constants for curve arrangements on surfaces and compare them with the usual Seshadri constants. We give several examples illustrating the properties that we study. 
	\end{abstract}
	
		\maketitle

	\section{Introduction}
	In this note, we study multi-point Seshadri constants of ample line bundles centered at the singular loci of certain curve arrangements on surfaces. We recall the notion of  multi-point Seshadri constants briefly below. 
	
	Let $X$ be a smooth complex projective variety and let $L$ be a nef line bundle on
	$X$. Let $r \ge 1$ be an integer and let $x_1,\ldots,x_r$ be distinct
	points of $X$.
	The {\it Seshadri constant} of $L$ at $x_1,\ldots,x_r \in X$ is defined as: 
	$$\varepsilon(X,L,x_1,\ldots,x_r):=  \inf\limits_{\substack{C \subset
			X{\rm ~a~ curve~ with} \\C \cap \{x_1,\ldots,x_r\}
			\ne \emptyset}} \frac{L\cdot C}{\sum\limits_{i=1}^r {\rm
			mult}_{x_i}C}.$$ 
	
	It is easy to see that the infimum above is the same as the infimum taken over
	irreducible, reduced curves $C$ such that $C \cap \{x_1,\ldots,x_r\}
	\ne \emptyset$.
	
	The following is a  well-known upper bound for Seshadri
	constants. 
	Let $n$ be the dimension of $X$. Then for
	any $x_1,\ldots,x_r \in X$, 
	\begin{eqnarray*}\label{wellknown}
		\varepsilon(X,L,x_1,\ldots,x_r) \le
		\sqrt[n]{\frac{L^n}{r}}.
	\end{eqnarray*}
	The study of multi-point Seshadri constants is interesting even in the case of the projective plane $\mathbb{P}^2.$ A famous conjecture of Nagata asserts that if $r \geq 10$ and $x_1,\cdots, x_r \in \mathbb{P}^2 $ are very general, then 
	$$\varepsilon(\mathbb{P}^2,\mathcal{O}_{\mathbb{P}^2}(1),x_1,\ldots,x_r)= \frac{1}{\sqrt{r}}.$$ 
	Nagata \cite{Nag} proved this when $r=s^2$ for an integer $s$, but it is open in all other cases. 
	
	On the other hand, when the points $x_1, \cdots, x_r$ lie in special position, the corresponding  Seshadri constant is frequently rational, and it is a very interesting problem to compute it. For example, if the points are collinear then 
	$\varepsilon(\mathbb{P}^2,\mathcal{O}_{\mathbb{P}^2}(1),x_1,\ldots,x_r) = \frac{1}{r}$. 
	
	In this direction, the author in \cite{Pok} considers behaviour of multi-point Seshadri constants  
	of $\mathcal{O}_{\mathbb{P}^2}(1)$ at the 
	set of singular points  of a line configuration in the plane $\mathbb{P}^2.$  The following 
	question was posed in \cite{Pok}. 
	
		\begin{question}\label{qn1}
		Let $x_1, \cdots, x_r \in \mathbb{P}^2$ be the set of singular points in a configuration of lines  which is not a pencil of lines. Let $k$ denote the maximum number of points which lie on a line. Is it true that 
		$$\varepsilon(\mathbb{P}^2,\mathcal{O}_{\mathbb{P}^2}(1),x_1,\ldots,x_r)= \frac{1}{k} ? $$
	\end{question}
	In \cite{Pok}, the author gives specific examples like line arrangements satisfying \textit{Hirzebruch's property} and star arrangement of $d$ lines in $\mathbb{P}^2$ where there is an affirmative answer to the above question. 
	In \cite{JP}, the authors extend the above study to the case of arrangements of plane curves of a fixed degree. These arrangements were introduced in the context of Harbourne constants in \cite{PRS} and the bounded negativity conjecture. In order to study the  local negativity phenomenon for algebraic surfaces, it was more reasonable to consider curve arrangements instead of irreducible curves as they are more difficult to construct.  
	
    	In \cite{JP}, the authors also study the multi-point Seshadri constants of $\mathcal{O}_{\mathbb{P}^{2}}(1)$ centered at singular loci of certain curve arrangements $\mathcal{C}$ in $\mathbb{P}^{2}$ using a combinatorial invariant called the  \textit{configurational Seshadri constant of $\mathcal{C}$}. They give lower bounds for the configurational Seshadri constants of $\mathcal{O}_{\mathbb{P}^{2}}(1)$ and also provide some actual values of the multi-point Seshadri constants for some classes of curve arrangements, comparing them with their associated configurational Seshadri constants.
	
In this paper, we continue this study by looking at connected curve arrangements on arbitrary surfaces. We prove some analogues of the results in \cite{JP} for arbitrary surfaces. We work over the field of complex numbers.

We recall below the basic definitions that we require in this paper.

	\begin{definition}[Transversal arrangement]
		Let $\mathcal{C}=\{C_1,C_2, \ldots ,C_d\}$ be an arrangement of curves
		on a smooth projective surface $X$ such that their union $C_1 \cup C_2 \dots \cup C_d$ is a connected curve in $X$. 
		We say that $\mathcal{C}$ is a \emph{transversal arrangement}
		if $d\geq 2$, all  curves $C_i$ are smooth, irreducible 
		and they intersect pairwise transversally.
	\end{definition}
	
	Let $\text{Sing}(\mathcal{C})$ be the set of all intersection points
	of the curves in a transversal arrangement $\mathcal{C}$. 
	Let $s$ denote the number of points in $\text{Sing}(\mathcal{C})$.

	\begin{definition}[Combinatorial invariants of transversal arrangements]
		Let	$\mathcal{C}$ be a transversal arrangement on a smooth surface
		$X$. For a point $p\in X,$
		let $r_p$ denote the  number of elements of $\mathcal{C}$ that pass
		through $p$. We call $r_p$ the \textit{multiplicity} of $p$ in
		$\mathcal{C}.$ We say $p$ is a $k$-fold point of $\mathcal{C}$ 
		if there are exactly $k$ curves in $\mathcal{C}$ passing through $p.$
		For a positive integer $k\geq2$, $t_k$ denotes the number of $k$-fold points in $\mathcal{C}$.
	\end{definition}
	These numbers satisfy the following standard equality, which follows by
	counting incidences in a transversal arrangement in two ways:
	\begin{equation}\label{eq:combinatorial general}
		\sum_{i<j}(C_i\cdot C_j)=\sum_{k\geq 2}\binom{k}{2}t_k.
	\end{equation}
	Also, let $$f_i=f_i(\mathcal C) :=\sum_{k\geq 2}k^i t_k.$$
	In particular, $f_0=s$ is the number of points in
	$\text{Sing}(\mathcal{C})$. 
	
	\begin{definition}
		Let $\mathcal{C}=\{C_1,C_2, \ldots ,C_d\}$ be a connected arrangement of irreducible curves
		on a smooth projective surface $X$ and let $\text{Sing}(\mathcal{C})$ denote the singular locus.  Let $L$ be an ample line bundle on $X$. We define the \textit{configurational Seshadri constant} of $\mathcal{C}$ as 
			$$\varepsilon_\mathcal{C}(L):= \frac{\sum\limits_{i=1}^{d}L\cdot C_i}{\sum\limits_{p \in \text{Sing}(\mathcal{C}) }\text{mult}_p(\mathcal{C})}.$$
	\end{definition}	
	

One class of arrangements studied in this paper are 
 natural analogues of the classical star arrangements of lines on $\mathbb{P}^2$. 
We make the following definition. 

\begin{definition} Let $X$ be a nonsingular projective surface.  
A transversal arrangement of curves $\mathcal{C}$ on $X$
		 is called a \emph{star  configuration} if the only intersection points are double points.
	\end{definition}
	
If $\mathcal{C}= \{C_1,C_2, \ldots ,C_d\}$ is a star arrangement and $C_i$ are linearly equivalent to each other then $$t_2=\frac{C_1^2d(d-1)}{2}\text{ and }t_k = 0 \text{ for } k > 2.$$ 

Star configurations of lines in $\mathbb{P}^2$, and more generally of hyperplanes in $\mathbb{P}^n$, are extensively studied; 
see \cite{CV,GHM, JP}, for some results. 	
	
In Section \ref{main}, we prove one of our main results (Theorem \ref{main-theorem})
for a transversal arrangement $\mathcal{C}$ of curves on a smooth projective surface under a nefness condition. We will then assume that all curves in $\mathcal{C}$ are linearly equivalent to a fixed divisor and prove some results for such configurations using Theorem \ref{main-theorem}. The main result in this set-up is Corollary \ref{prop:star}.  

In Section \ref{ruled}, we consider some examples on ruled surfaces. In Section \ref{compare}, we compare the usual Seshadri constants with configurational Seshadri constants. In Section \ref{bounds}, we give lower bounds for configurational Seshadri constants for curve arrangements on surfaces. 

Finally, we give an example (Example \ref{K3}) of a computation of the multi-point Seshadri constant at the singular locus of an arrangement of curves on a K3 surface. In this example, the nefness hypothesis of Theorem \ref{main-theorem} is not true but the conclusion about the Seshadri constant still holds. 
	
	\section{Seshadri constants for certain transversal curve arrangements}\label{main}	

In this section, we will first prove a result computing Seshadri constants for arbitrary transversal arrangements under a nefness condition. We will then study special cases of this theorem.

We first introduce some notation. 
		Let $\mathcal{C}$ be a transversal arrangement of curves on a smooth projective surface $X$. 
  
  For each $1\le i \le d$, let $b_i =  \# \, C_{i} \cap {\rm Sing}(\mathcal{C})$. Since $\mathcal{C}$ is connected, $b_i >0$ for all $i$.
   We define the \emph{base constant} of $\mathcal{C}$ as 
   $$\text{bs}(\mathcal{C}) = \max\{b_1,\dots, b_d\},$$ i.e., 
$\text{bs}(\mathcal{C})$		
  is equal to the maximum number of singular points  of $\mathcal{C}$ that are contained in a single curve $C_{i} \in \mathcal{C}$.

The following question is a generalization of Question \ref{qn1} to curve arrangements on arbitrary surfaces.

	\begin{question}\label{question}
		Let $\mathcal{C} =\{C_1,C_2, \ldots ,C_d\}$ be a transversal arrangement of curves on a smooth projective surface $X$. Let $L$ be an ample line bundle on $X$. Is it true that
		\begin{equation*}
  \varepsilon(X, L,{\rm Sing}(\mathcal{C})) =  \min_{1\le i \le d}\left\{\frac{L\cdot C_i}{b_i}\right\}?
		\end{equation*}
	\end{question}

By \cite[Example 3.5]{JP}, the above question has a negative answer for the Hesse arrangement of conics 
in $\mathbb{P}^2$; see Example \ref{hesse}. However it is open for line arrangements in $\mathbb{P}^2$. 
It is interesting to study the situations in which it is true on arbitrary surfaces. Our main theorem below answers this question positively under an additional nefness condition.


	
 
	\begin{theorem}
		\label{main-theorem} Let $X$ be a nonsingular projective surface and let $\mathcal{C} = \{C_{1}, \ldots, C_{d}\}$ be a transversal configuration of curves on $X$ with $d \geq 4$. 
		Let $L$ be an ample line bundle on $X$
		such that  $\frac{d\sum b_i}{(d-1)^2(\sum L\cdot C_i)}L-C_i$ is nef for each $i$.   Then 
		$$\varepsilon(X, L,{\rm Sing}(\mathcal{C})) = \min_{1\le i\le d}
  {\left\{\frac{L\cdot C_i}{b_i}\right\}}.$$
	\end{theorem}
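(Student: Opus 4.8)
The plan is to prove the two inequalities separately; throughout write $\beta := \min_{1\le i \le d}\{(L\cdot C_i)/b_i\}$ for the quantity on the right-hand side. The upper bound $\varepsilon(X,L,\text{Sing}(\mathcal{C})) \le \beta$ requires no hypothesis on $L$ beyond ampleness: since each $C_i$ is smooth and meets $\text{Sing}(\mathcal{C})$ in exactly $b_i$ points, each with multiplicity $1$, using $C_i$ as a test curve in the definition gives $\varepsilon \le (L\cdot C_i)/\sum_p \mathrm{mult}_p C_i = (L\cdot C_i)/b_i$, and taking the minimum over $i$ yields the bound.

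The substance is the lower bound $\varepsilon \ge \beta$, i.e.\ showing that every irreducible reduced curve $C$ meeting $\text{Sing}(\mathcal{C})$ satisfies $L\cdot C \ge \beta \sum_p \mathrm{mult}_p C$, the sum running over $p \in \text{Sing}(\mathcal{C})$. If $C = C_i$ for some $i$ this is immediate, so I would assume $C$ is not a member of the arrangement. Writing $m_p = \mathrm{mult}_p C$ and letting $r_p$ be the number of curves of $\mathcal{C}$ through $p$, the fact that $C$ shares no component with $C_i$ gives the local estimate $(C\cdot C_i)_p \ge m_p\cdot \mathrm{mult}_p C_i = m_p$ at each $p \in C_i$. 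Summing the global intersection numbers over $i$ and double-counting incidences then yields
\[
\sum_{i=1}^d C\cdot C_i \;\ge\; \sum_{p \in \text{Sing}(\mathcal{C})} m_p\, r_p \;\ge\; 2\sum_{p}m_p,
\]
the last step because every singular point of $\mathcal{C}$ has $r_p \ge 2$.

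Next I would feed in the nefness hypothesis. Setting $\lambda := \frac{d\sum b_i}{(d-1)^2(\sum L\cdot C_i)}$, the assumption that $\lambda L - C_i$ is nef lets me pair it with the effective curve $C$ to get $C_i \cdot C \le \lambda\,(L\cdot C)$ for each $i$; summing over $i$ and combining with the incidence bound gives $d\lambda\,(L\cdot C) \ge 2\sum_p m_p$. It then remains to check that $2/(d\lambda) \ge \beta$: from $\beta b_i \le L\cdot C_i$ one obtains $\beta \le (\sum L\cdot C_i)/(\sum b_i)$, hence $\lambda \le \frac{d}{(d-1)^2\beta}$ and
\[
\frac{2}{d\lambda} \;\ge\; \frac{2(d-1)^2}{d^2}\,\beta .
\]
Finally $\frac{2(d-1)^2}{d^2}\ge 1$ is equivalent to $d^2-4d+2\ge 0$, which holds for integers $d\ge 4$; this is exactly where the hypothesis $d\ge 4$ enters. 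Thus $L\cdot C \ge \beta\sum_p m_p$, completing the lower bound.

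I expect the delicate points to be purely in the bookkeeping: converting the double sum $\sum_i\sum_{p\in C_i}m_p$ correctly into $\sum_p m_p r_p$, and then threading the chain of inequalities so that the constant $\frac{2(d-1)^2}{d^2}$ appears and the threshold $d\ge 4$ becomes transparent. The geometric inputs are each used exactly once---smoothness of the $C_i$ (forcing unit multiplicities and the local intersection estimate), connectedness and transversality (so that $r_p\ge 2$), and nefness (to pass from $\sum_i C\cdot C_i$ to $L\cdot C$)---so the main obstacle is the algebraic calibration of the constant rather than any deep geometry. I would also remark that the sharpness of $d\ge 4$ suggests the bound $r_p\ge 2$ is the worst case, realized precisely by star configurations.
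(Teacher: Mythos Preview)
Your proposal is correct and follows essentially the same approach as the paper: both arguments combine the B\'ezout estimate $\sum_i C\cdot C_i \ge 2\sum_p m_p$ with the nefness hypothesis $\lambda L - C_i$ nef to compare $L\cdot C$ against $\sum_p m_p$, and both reduce the final step to the inequality $2(d-1)^2 \ge d^2$ for $d\ge 4$. The only cosmetic difference is that the paper argues by contradiction (assuming a curve with Seshadri ratio strictly below $\beta$ and deriving $(d/(d-1))^2 > 2$), whereas you carry out the chain of inequalities directly; the ingredients and the role of each hypothesis are identical.
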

	\begin{proof}
	 Let $\mathcal{P} = \{p_{1}, \ldots, p_{s}\}$ be the singular locus of  $\mathcal{C}$. For each $i$, exactly $b_i$ points in $\mathcal{P}$  lie on $C_i$. Since each $C_i$ is smooth, the Seshadri ratio computed by $C_i$ is precisely 
	  $ \frac{L\cdot C_i}{b_i}$. 

  We will now show that the Seshadri ratio given by any curve $D$ not in $\mathcal{C}$ is at least $\min_{1\le i\le d}
  {\left\{\frac{L\cdot C_i}{b_i}\right\}}$. 
  
	 Suppose that there exists an irreducible and reduced curve $D\notin \mathcal{C}$ having multiplicity $m_{p}(D)$ at each point $p \in \mathcal{P}$ such that $D \cap \mathcal{P} \ne \emptyset$ and 
		$$\frac{L \cdot D}{\sum_{p \in \mathcal{P}} m_{p}(D)} < \min_{1\le i \le d}\left\{\frac{L\cdot C_i}{b_i}\right\}.$$
So for each $1 \le i \le d$, we have 
                $$b_i(L \cdot D)  < (L\cdot C_i) \sum_{p \in \mathcal{P}} m_{p}(D).$$ 
                
Adding over all $i$, we have 
		$$(\triangle) : \quad (L \cdot D)\sum_{i=1}^d b_i < \left(\sum_{p \in \mathcal{P}} m_{p}(D)\right)\sum_{i=1}^d L\cdot C_i.$$

  By B\'ezout's theorem applied to $D$ and $C_1+\dots+C_d$, we obtain
		\begin{eqnarray*}
		 D\cdot (C_{1}+ \ldots + C_{d}) &\geq& \sum_{p \in \mathcal{P}}m_{p}(D)\cdot m_{p}(C_{1}+ \ldots + C_{d})\\
		&\stackrel{(*)}{\geq}& 2\sum_{p \in \mathcal{P}} m_{p}(D) \\
		&\stackrel{(\triangle)}{>}& \frac{2(L \cdot D)\sum b_i}{\sum L\cdot C_i},
		\end{eqnarray*}
		where $(*)$ comes from the fact that all the singular points of $C_{1}+ \ldots + C_{d}$ are at least double points. Now we use the nefness 
		of $\frac{d\sum b_i}{(d-1)^2(\sum L\cdot C_i)}L-C_i$ and note the following  inequality: 
		\[
		\frac{d\sum b_i}{(d-1)^2(\sum L\cdot C_i)}L\cdot D \geq D\cdot C_i.
		\] 
  
		After adding over all $i=1,...,d$, this leads to the following, 
		\begin{eqnarray*}
			\frac{d^2\sum b_i}{(d-1)^2(\sum L\cdot C_i)}L\cdot D &\geq&  D\cdot (C_{1}+ \ldots + C_{d}) > \frac{2(L \cdot D)\sum b_i}{\sum L\cdot C_i} \\
			\Rightarrow \left(\frac{d}{d-1} \right)^2 &>& 2 \quad \quad \text{(since $\sum b_i > 0$ and $L$ is ample)}.
		\end{eqnarray*}
		This is a contradiction, since $d\geq 4$.
	\end{proof}

\begin{remark}
Several authors have studied multi-point Seshadri constants on surfaces; for a sample of results, see \cite{B,HM, HR, PR, RR, SS}. 
Many of these results consider general points. 
Theorem \ref{main-theorem} gives a computation of Seshadri constants at \textit{special} points. 
\end{remark}

Now we will work with configurations satisfying some additional assumptions. Specifically, we will make the following assumption. 

	\begin{assumption}\label{star1}
		Let $\mathcal{C}=\{C_1,C_2, \ldots ,C_d\}$, $d\geq 4$ be a (connected) 
  transversal arrangement of curves 
		on a complex smooth projective surface $X$ such that all the curves 
  $C_i$ in $\mathcal{C}$ are linearly equivalent to a
			fixed divisor $A$ on $X$.
	\end{assumption}

 \begin{remark} 
Recall that the arrangements we study in this paper are all connected. Since each curve in $\mathcal{C}$ is assumed to be linearly equivalent to a fixed divisor $A$, 
Assumption \ref{star1} implies in particular that $C_i\cdot C_j > 0$ for all $1\le i,j \le d$.  
 \end{remark}

For an arbitrary curve arrangement $\mathcal{C}$ satisfying Assumption \ref{star1}, the base constant is at most 
$C_1^2(d-1)$. So the following question arises naturally from
Question \ref{question}. 

	\begin{question}\label{QuestM}
		Let $\mathcal{C} =\{C_1,C_2, \ldots ,C_d\}$ be a transversal arrangement of curves on a smooth projective surface $X$
		satisfying Assumption \ref{star1}. Let  ${\rm Sing}(\mathcal{C})$ denote the singular locus of $\mathcal{C}$ and let $L$ be an ample line bundle on $X$. Is it true that
		\begin{equation}\label{ques}
			\varepsilon(X, L,{\rm Sing}(\mathcal{C})) \geq \frac{L \cdot C_1}{C_1^2(d-1)} ?	
		\end{equation}	
		
	\end{question}	
	
	For star arrangements, the base constant is exactly equal to $C_1^2(d-1)$. We answer the above question positively for some arrangements below.

	
	\begin{corollary}
		\label{prop:star} Let $X$ be a nonsingular projective surface and let $\mathcal{C} = \{C_{1}, \ldots, C_{d}\}$ be a star configuration of curves on $X$ with $d \geq 4$ such that each $ C_i$ is linearly equivalent to a fixed divisor $A$ on $X$. Let $L$ be an ample line bundle on $X$
		such that  $\frac{dC_1^2}{(d-1)(L\cdot C_1)}L-C_1$ is nef.   Then 
		$$\varepsilon(X, L,{\rm Sing}(\mathcal{C})) = \frac{L\cdot C_1}{C_1^2(d-1)}.$$
	\end{corollary}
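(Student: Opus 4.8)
The plan is to derive Corollary \ref{prop:star} directly from Theorem \ref{main-theorem} by checking that the hypotheses specialize correctly under the star configuration assumption and the assumption that every $C_i$ is linearly equivalent to the fixed divisor $A$. First I would record the numerical consequences of linear equivalence: for all $i$ and all $j \neq i$ we have $L \cdot C_i = L \cdot C_1$, $C_i^2 = C_1^2$, and $C_i \cdot C_j = A^2 = C_1^2$. In particular $\sum_{i=1}^d L\cdot C_i = d(L\cdot C_1)$.

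Next I would compute the base constants $b_i$. Because $\mathcal{C}$ is a star configuration, every singular point is a double point, so no three curves share a point, and the intersection of $C_i$ with the remaining $d-1$ curves consists of $\sum_{j\neq i} C_i \cdot C_j = (d-1)C_1^2$ distinct points, each lying on $C_i$. Hence $b_i = C_1^2(d-1)$ for every $i$, and therefore $\sum_{i=1}^d b_i = dC_1^2(d-1)$. This already shows that all the Seshadri ratios $\tfrac{L\cdot C_i}{b_i}$ coincide and equal $\tfrac{L\cdot C_1}{C_1^2(d-1)}$, so the minimum appearing in the conclusion of Theorem \ref{main-theorem} is exactly the right-hand side claimed in the corollary.

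It then remains to verify the nefness hypothesis. Substituting $\sum b_i = dC_1^2(d-1)$ and $\sum L\cdot C_i = d(L\cdot C_1)$ into the coefficient $\tfrac{d\sum b_i}{(d-1)^2(\sum L\cdot C_i)}$ of Theorem \ref{main-theorem}, I expect a straightforward cancellation giving $\tfrac{dC_1^2}{(d-1)(L\cdot C_1)}$. Since all the $C_i$ are linearly equivalent to $A$, the class $\tfrac{dC_1^2}{(d-1)(L\cdot C_1)}L - C_i$ is numerically the same for every $i$, so nefness for all $i$ is equivalent to the single hypothesis of the corollary, namely nefness of $\tfrac{dC_1^2}{(d-1)(L\cdot C_1)}L - C_1$. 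With both hypotheses of Theorem \ref{main-theorem} verified, its conclusion yields $\varepsilon(X,L,\text{Sing}(\mathcal{C})) = \tfrac{L\cdot C_1}{C_1^2(d-1)}$. Since the whole argument is essentially bookkeeping, there is no substantial obstacle; the only point requiring a moment's care is the claim that the $(d-1)C_1^2$ intersection points on a given $C_i$ are genuinely distinct, which is guaranteed precisely by the star configuration hypothesis that every singular point is a double point.
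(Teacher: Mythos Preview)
Your proposal is correct and follows essentially the same approach as the paper: the paper's proof simply notes that $b_i = C_1^2(d-1)$ and $L\cdot C_i = L\cdot C_1$ for all $i$ and then invokes Theorem~\ref{main-theorem}. Your write-up just makes explicit the bookkeeping (the distinctness of the $(d-1)C_1^2$ points on each $C_i$ and the simplification of the nefness coefficient) that the paper leaves to the reader.
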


 \begin{proof}
Under the given hypotheses, we have $b_i = C_1^2(d-1)$  and $L\cdot C_i = L\cdot C_1$
for all $i$. 
The proof is now immediate from Theorem \ref{main-theorem}.    
 \end{proof} This immediately gives the following.

	\begin{corollary}
		\label{coro:star}
		If $\mathcal{C} = \{C_{1}, \ldots, C_{d}\}$ is a star configuration on a nonsingular projective surface $X$ with $d \geq 4$  and $L$ is an ample line bundle  on $X$ such that $ C_i \in |mL|$ for all $i$ and a positive integer $m>0$, then 
		$$\varepsilon(X, L,{\rm Sing}(\mathcal{C})) = \frac{1}{m(d-1)}.$$
	\end{corollary}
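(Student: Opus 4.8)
The plan is to deduce this directly from Corollary \ref{prop:star} by specializing to the case $C_i \in |mL|$, so the whole task reduces to recording two intersection numbers and checking that the nefness hypothesis of that corollary is automatic. First I would note that since each $C_i$ is linearly equivalent to $mL$, the curves are all linearly equivalent to the fixed divisor $A := mL$, so Assumption \ref{star1} holds and Corollary \ref{prop:star} applies once its nefness condition is verified. Because intersection numbers depend only on the linear equivalence class, I have $L\cdot C_1 = m L^2$ and $C_1^2 = m^2 L^2$.

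The second step is to verify the hypothesis that $\frac{dC_1^2}{(d-1)(L\cdot C_1)}L - C_1$ is nef. Substituting the numbers above, the coefficient of $L$ becomes $\frac{d\, m^2 L^2}{(d-1)\, m L^2} = \frac{dm}{d-1}$, so this divisor is numerically equivalent to $\frac{dm}{d-1}L - mL = \frac{m}{d-1}L$. Since $m>0$, $d\ge 4$, and $L$ is ample, this is a positive rational multiple of an ample class, hence nef. This is the one point that genuinely needs checking, and it is precisely where the assumption $C_i \in |mL|$ does its work: writing the $C_i$ as multiples of $L$ forces the nefness condition to hold without any extra hypothesis.

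Finally I would substitute into the conclusion of Corollary \ref{prop:star}:
$$\varepsilon(X, L,\text{Sing}(\mathcal{C})) = \frac{L\cdot C_1}{C_1^2(d-1)} = \frac{m L^2}{m^2 L^2 (d-1)} = \frac{1}{m(d-1)},$$
where the cancellation of $L^2$ is legitimate since $L^2>0$ by ampleness. I do not expect any real obstacle here: the entire content is the observation that the hypothesis $C_i \in |mL|$ simultaneously guarantees the nefness condition and collapses the intersection numbers, so the statement is genuinely immediate from the preceding corollary.
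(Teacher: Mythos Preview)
Your proposal is correct and follows exactly the approach intended in the paper, which simply states that Corollary~\ref{coro:star} is immediate from Corollary~\ref{prop:star}. Your verification that the nefness hypothesis reduces to the positivity of $\frac{m}{d-1}L$ and your substitution of $L\cdot C_1 = mL^2$, $C_1^2 = m^2L^2$ make explicit precisely what the paper leaves implicit.
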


The proof of Theorem \ref{main-theorem} can be modified to show that for arrangements
satisfying Assumption \ref{star1}, which are not necessarily star arrangements, we still have the following lower and upper bounds for the Seshadri constants.

\begin{corollary}\label{cor-main}
Let $X$ be a nonsingular projective surface and let $\mathcal{C} = \{C_{1}, \ldots, C_{d}\}$ be a configuration of curves on $X$ satisfying Assumption \ref{star1}.
 Let $L$ be an ample line bundle on $X$
		such that  $\frac{dC_1^2}{(d-1)(L\cdot C_1)}L-C_1$ is nef.   Then
$$\frac{L\cdot C_1}{C_1^2(d-1)} \le \varepsilon(X, L,{\rm Sing}(\mathcal{C})) \le \frac{L \cdot C_1}{{\rm bs} (\mathcal{C})}  .$$
\end{corollary}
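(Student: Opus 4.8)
The plan is to prove the two inequalities separately, the upper bound being essentially formal and the lower bound requiring a careful re-run of the argument in Theorem \ref{main-theorem}. Throughout I would exploit the numerical simplifications coming from Assumption \ref{star1}: since every $C_i$ is linearly equivalent to $A$, we have $L\cdot C_i = L\cdot C_1$ and $D\cdot C_i = D\cdot C_1$ for every curve $D$ and every $i$, while $C_i\cdot C_j = C_1^2$ for $i\neq j$. I would also record the inequality $b_i \le C_1^2(d-1)$ noted before Question \ref{QuestM}, which follows because $C_i\cdot\big(\sum_{j\neq i}C_j\big) = (d-1)C_1^2$ while each of the $b_i$ points of ${\rm Sing}(\mathcal{C})$ lying on $C_i$ contributes at least $1$ to this intersection number. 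Note that this bound already gives $\frac{L\cdot C_1}{C_1^2(d-1)} \le \frac{L\cdot C_1}{{\rm bs}(\mathcal{C})}$, so the two displayed bounds are consistent.

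For the upper bound I would just exhibit a curve realizing the claimed ratio. Choose $C_j\in\mathcal{C}$ with $b_j = {\rm bs}(\mathcal{C})$; since $\mathcal{C}$ is connected, $b_j\ge 1$, and as $C_j$ is smooth it passes through exactly $b_j$ points of ${\rm Sing}(\mathcal{C})$, each with multiplicity one. Hence $C_j$ computes the Seshadri ratio $\frac{L\cdot C_j}{b_j} = \frac{L\cdot C_1}{{\rm bs}(\mathcal{C})}$, and because the Seshadri constant is an infimum over all curves meeting the point set, the bound $\varepsilon(X,L,{\rm Sing}(\mathcal{C})) \le \frac{L\cdot C_1}{{\rm bs}(\mathcal{C})}$ is immediate.

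For the lower bound the crucial observation is that one cannot invoke Theorem \ref{main-theorem} verbatim: its hypothesis uses the coefficient $\frac{d\sum b_i}{(d-1)^2(\sum L\cdot C_i)}$, which for a general arrangement under Assumption \ref{star1} is strictly smaller than the coefficient $\frac{dC_1^2}{(d-1)(L\cdot C_1)}$ assumed here, and nefness with a larger coefficient does not force nefness with a smaller one. Instead I would rerun the contradiction argument of Theorem \ref{main-theorem}, but compare against the uniform threshold $\frac{L\cdot C_1}{C_1^2(d-1)}$ rather than $\min_i\frac{L\cdot C_i}{b_i}$. Suppose an irreducible reduced curve $D\notin\mathcal{C}$ satisfied $\frac{L\cdot D}{\sum_p m_p(D)} < \frac{L\cdot C_1}{C_1^2(d-1)}$. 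B\'ezout applied to $D$ and $C_1+\dots+C_d$, together with the fact that every point of ${\rm Sing}(\mathcal{C})$ is at least a double point of the union, gives $D\cdot(C_1+\dots+C_d) \ge 2\sum_p m_p(D)$. On the other hand the assumed nefness yields $C_1\cdot D \le \frac{dC_1^2}{(d-1)(L\cdot C_1)}(L\cdot D)$, and since $D\cdot C_i = D\cdot C_1$ for all $i$, summing bounds $D\cdot(C_1+\dots+C_d)$ above by $\frac{d^2C_1^2}{(d-1)(L\cdot C_1)}(L\cdot D)$. Chaining these with the assumed strict inequality and cancelling the positive factor $\frac{C_1^2(L\cdot D)}{L\cdot C_1}$ collapses everything to $\frac{d^2}{d-1} > 2(d-1)$, i.e. $d^2 > 2(d-1)^2$, which is false for every $d\ge 4$. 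This contradiction rules out such a $D$; combined with the observation that each $C_i\in\mathcal{C}$ computes the ratio $\frac{L\cdot C_1}{b_i}\ge \frac{L\cdot C_1}{C_1^2(d-1)}$ (using $b_i\le C_1^2(d-1)$), every competing curve gives a ratio at least $\frac{L\cdot C_1}{C_1^2(d-1)}$, which is the lower bound. The only delicate point is bookkeeping: one must check that replacing the sharp threshold $\min_i\frac{L\cdot C_i}{b_i}$ by the uniform $\frac{L\cdot C_1}{C_1^2(d-1)}$ is exactly what downgrades the equality of Theorem \ref{main-theorem} to a one-sided estimate, while the given (star-type) nefness is precisely strong enough to drive the computation to the clean inequality $d^2 > 2(d-1)^2$.
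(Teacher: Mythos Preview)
Your proposal is correct and follows essentially the same approach as the paper: the upper bound comes from the Seshadri ratio of a curve $C_j$ with $b_j = {\rm bs}(\mathcal{C})$, and the lower bound reruns the B\'ezout/nefness contradiction argument of Theorem \ref{main-theorem} against the uniform threshold $\frac{L\cdot C_1}{C_1^2(d-1)}$, reaching the same final inequality $\left(\frac{d}{d-1}\right)^2 > 2$. Your additional remark that Theorem \ref{main-theorem} cannot be invoked directly (because the nefness coefficient there may be strictly smaller) is a useful clarification that the paper leaves implicit.
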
 
\begin{proof}

Let  $D \notin \mathcal{C}$ be 
a reduced and irreducible curve which contains some points of the singular locus $\mathcal{P}$ of $\mathcal{C}$. Essentially following the proof of Theorem \ref{main-theorem}, we can show that 
$$ \frac{L\cdot C_1}{C_1^2(d-1)} \le \frac{L \cdot D}{\sum_{p \in \mathcal{P}} m_{p}(D)}.$$


Indeed, suppose that the above inequality does not hold. 
As in the proof of Theorem \ref{main-theorem}, B\'ezout's theorem applied to $D$ and $C_1+\dots+C_d$ gives

$$		 D\cdot (C_{1}+ \ldots + C_{d}) \ge  
\frac{2(d-1)(L \cdot D)C_1^2}{L\cdot C_1}.$$

		Now we use the nefness of 
  $\frac{dC_1^2}{(d-1)(L\cdot C_1)}L-C_1$ and obtain a contradiction exactly as in the proof of Theorem \ref{main-theorem}. We omit the details since the argument is identical.

For curves in $\mathcal{C}$, the least Seshadri ratio is  $\frac{L \cdot C_1}{\rm{bs} (\mathcal{C})}$. 
Since we always have ${\rm bs} (\mathcal{C}) \le C_1^2(d-1)$, the proof is complete. 
\end{proof}

\begin{example}\label{hesse}
Consider the Hesse arrangement $\mathcal{C} = \{C_1, \ldots, C_{12}\}$  of conics in $\mathbb{P}^2$
which has 21 singular points.  The base constant of $\mathcal{C}$  is 8.  
 The Seshadri constant of $L = \mathcal{O}_{\mathbb{P}^2}(1)$ at these 21 points is known to be $\frac{1}{5}$; see \cite[Example 3.5]{JP}. 

We note that the nefness assumption of Theorem \ref{main-theorem} does not hold here. Indeed, $b_i = 8$ for each $1 \le i \le 12$. So the divisor 
$\left(\frac{12(b_1+\dots + b_{12})}{11*11*24}\right)L - C_1$ is not nef because 
$b_1+\dots + b_{12} < 4*11^2$. 

However, Corollary \ref{cor-main} does apply in this case. 
It is easy to see that $\frac{24}{11}L - C_1$ is nef. 
Then Corollary \ref{cor-main} implies
$$\frac{1}{22} <  \varepsilon(\mathbb{P}^2, L,{\rm Sing}(\mathcal{C})) = \frac{1}{5}<  \frac{1}{4}.$$

This shows that the inequalities in Corollary \ref{cor-main} are strict, in general. In particular,  Corollary \ref{prop:star} does not hold in this case. Note that $\mathcal{C}$ is not a star arrangement.

\end{example}

\begin{example}
		Let $\mathcal{L}_n \subset \mathbb{P}^2$ denote the $n$-th Fermat arrangement of $3n$ lines $l_i$ with $n \geq 3$ given by the linear factors of the polynomial 
		\[
		Q(x,y,z)=(x^n-y^n)(y^n-z^n)(z^n-x^n).
		\]
		It is well known that it has $n^2$ (say $\{p_1,\ldots,p_{n^2}\}$) triple points and $3$ points of multiplicity $n$ (say $\{q_1,q_2,q_3\}$); see \cite[Example II.6]{U}.  We now consider a 
		curve $C$ of degree $6$ not passing through either $p_i$ or $q_j$ and take the double cover 
				$\pi : X \longrightarrow \mathbb{P}^2$
		branched along $C$. Then $X$ is a K3 surface  and $L : = \pi^*(\mathcal{O}(1))$ is ample. 

For each $i$, let $C_i$ denote the inverse image of $l_i$ and let 
$\mathcal{C}_n := \{C_1,\ldots, C_{3n}\}$ be an arrangement of curves on $X$. Note that $C_i^2 = 2$ for every $i$ and 
$C_i$ is linearly equivalent to $C_j$ for all $i,j$. 
One can check that the base constant of the arrangement $\mathcal{C}_n$ is $2n+2$. 

It is easy to see that this arrangement satisfies the hypotheses of Corollary \ref{cor-main}. 
So we have the following inequalities:
$$\frac{1}{3n-1} \le \varepsilon(X, L,{\rm Sing}(\mathcal{C}_n)) \le \frac{1}{n+1}.$$


\end{example}

	We now construct examples such that the conclusion of Corollary \ref{prop:star} holds, even without the nefness condition in the hypothesis. We construct these examples by obtaining transversal arrangement of curves on rational ruled surfaces coming from line arrangements in $\mathbb{P}^2.$ 

\subsection{Examples on ruled surfaces}\label{ruled}	
	
	We first recall some basic facts about ruled surfaces. We follow the notation of \cite[Chapter V, Section 2]{Har}.
	
	Let $C$ be a smooth complex curve of genus $g$ and let $\pi: X \to C$ be a ruled surface over $C$. We choose a normalized vector bundle $E$ of rank 2 on $C$ such that $X = \proj_C(E)$. Let $e :=-\text{deg}(E)$. Let $C_0$ be the image of a section of $\pi$ such that $C_0^2 = -e$ and let $f$ be a fiber of $\pi$. Then the Picard group of $X$ modulo numerical equivalence is a free abelian group of rank 2 generated by $C_0$ and $f$. We have $C_0\cdot f=1$ and $f^2 =0$.  
	
	A complete characterization of ample line bundles and irreducible curves on $X$ is known. For this, and other details on ruled surfaces, we refer to \cite[Chapter V, Section 2]{Har}.

	Let $X = X_e \to \mathbb{P}^1$ be a rational ruled surface with invariant $e \ge 1$. 
	Given a line arrangement in $\proj^2$, one can obtain an arrangement of
	curves on $X_e$, following a construction outlined in \cite[Example
	15]{E}, where a specific finite morphism $X_e \to X_1$ of degree $e$ is described. Note
	that $X_1$ is isomorphic to a blow up of $\proj^2$ at a point. So we
	can pull-back lines in $\proj^2$ to $X_e$ which are in the class 
	$(1,e)$. If $\mathcal{L}$ is a line arrangement of $d$ lines in the plane, its
	pull-back gives a curve arrangement $\mathcal{C}$ of $d$ curves in
	$X_e$. 
	
	To be more precise, suppose that $\mathcal{L}$ has $s$ singularities 
	and $t_k$ denotes the number of singular points of $\mathcal{L}$ of
	multiplicity $k$. Then the singular points of $\mathcal{C}$ are
	precisely the pre-images of singularities of $\mathcal{L}$. So 
	$\mathcal{C}$  has $es$ singular points and the
	number of singular points of multiplicity $k$ is $et_k$. Note that
	each curve in $\mathcal{C}$ is in the class $(1,e)$ and has
	self-intersection $e$.
	
	In the next example, the conclusion of Corollary \ref{prop:star} holds, even without the nefness condition in the hypothesis. 
	\begin{example}\label{nef}
		Consider a transversal arrangement $\mathcal{C} = \{C_{1}, \ldots, C_{5}\}$ of curves on a rational ruled surface $X=X_e$ with invariant $e=2$ such that $ C_i \in |C_0+2f|$ and such that the only intersection points are ordinary double points. This configuration can be obtained as pullback of a star configuration of lines from $\mathbb{P}^2$, using the above construction. Note that each $C_i$ contains exactly $(5-1)e=8$ double points from the arrangement.
		
 Let $L=C_0+3f$ be an ample line bundle on $X.$ Then we have $$(5C_1^2/4L\cdot C_1))L-C_1= \frac{1}{6}(-C_0+3f),$$ which is not nef as $(-C_0+3f)\cdot f = -1 < 0.$ Hence the nefness hypothesis in Corollary \ref{prop:star} fails. 
		
		We now verify that 
		$$\varepsilon(X, L,{\rm Sing}(\mathcal{C})) = \frac{L\cdot C_1}{4C_1^2}=\frac{3}{8}.$$
		
		Let $\mathcal{P} = \{p_{1}, \ldots, p_{20}\}$ denote the set of all the singular points of $\mathcal{C}$. Suppose that there exists an irreducible and reduced curve $D$, different from each $C_{i}$ for $i \in \{1,\ldots, 5\}$, having multiplicity $m_{p}(D)$ at each point $p \in \mathcal{P}$ such that
		\begin{equation}\label{eg ineq}
			\frac{L \cdot D}{\sum_{p \in \mathcal{P}} m_{p}(D)} < \frac{3}{8}.
		\end{equation}
		Using B\'ezout's theorem, we have
		\begin{eqnarray*}
		D\cdot (C_{1}+ \ldots + C_{5})=5(D\cdot C_1) &\geq& \sum_{p \in \mathcal{P}}m_{p}(D)\cdot m_{p}(C_1+\ldots+C_5)\\&=& 2\sum_{p \in \mathcal{P}} m_{p}(D)\\&>&\frac{16(L \cdot D)}{3}.\end{eqnarray*}
		
		Since $C_1$ is linearly equivalent to $L-f$ and $f$ is nef,  we have $$5(D \cdot L) \geq 5(D \cdot C_{1})> \frac{16(L \cdot D)}{3} ,$$ which is absurd since $L$ is ample.
		
	\end{example}
	
	\begin{example}\label{Hirz}
		We now construct examples where the inequality in Equation \ref{ques} is a strict inequality. Consider a transversal arrangement $\mathcal{C}_n = \{C_{1}, \ldots, C_{3n}\}$ of curves on a rational ruled surface $X=X_e$ with invariant $e\ge n>1$ such that $ C_i \in |C_0+ef|$ and such that the curves $C_i$ come as a pullback of a line arrangement $\mathcal{L} \subset \mathbb{P}^2$ satisfying Hirzebruch's property and having singular points with multiplicity greater than two, using the above construction, for more details, see \cite{Pok}. Recall that a line arrangement $\mathcal{L} \subset \mathbb{P}^2$ satisfies  Hirzebruch's property if the number of lines is equal to $3n$ for some $n \in \mathbb{Z}_{>0}$ and each line from $\mathcal{L}$ intersects any other line at exactly $n+1$ points. 
		
		Let $L=C_0+(e+1)f$ be an ample line bundle on $X$. We now verify that  
		$$\varepsilon(X, L,{\rm Sing}(\mathcal{C}_n)) = \frac{L\cdot C_1}{e(n+1)}=\frac{e+1}{e(n+1)}.$$
		Note that $\frac{e+1}{e(n+1)} > \frac{e+1}{e(3n-1)}$, which shows that the 
		inequality in Equation \ref{ques} is strict.

		Let $\mathcal{P}$ denote the set of all intersection points of $\mathcal{C}_n$. 
		The Seshadri ratio computed by $C_1$ is precisely $\frac{e+1}{e(n+1)}$. 
		Suppose that there exists an irreducible and reduced curve $D$, different from each $C_{i}$ for $i \in \{1,\ldots, 3n\}$, having multiplicity $m_{p}(D)$ at each point $p \in \mathcal{P}$ such that
		\begin{equation}\label{eg ineq}
			\frac{L \cdot D}{\sum_{p \in \mathcal{P}} m_{p}(D)} < \frac{e+1}{e(n+1)}.
		\end{equation}
		
		Using B\'ezout's theorem, we have
		\begin{eqnarray*}D\cdot (C_{1}+ \ldots + C_{3n})=3n(D\cdot C_1) &\geq& \sum_{p \in \mathcal{P}}m_{p}(D)\cdot m_{p}(C_1+\ldots+C_{3n}) \\ &\geq &3\sum_{p \in \mathcal{P}} m_{p}(D)\\ &>&\frac{3(L \cdot D)e(n+1)}{e+1}.\end{eqnarray*}
		
		Since $C_1$ is linearly equivalent to $L-f$ and $f$ is nef,  we have $$3n(D \cdot L) \geq 3n(D \cdot C_{1})>\frac{3(L \cdot D)e(n+1)}{e+1},$$ which is not possible since $e \ge n$ by the choice of $e$. 
		
	\end{example}
	
	\begin{example}\label{KCW}
		There are examples of line arrangements in $\mathbb{P}^2$ satisfying Hirzebruch's property and such that all the singular points have multiplicity greater than two; for more details, see \cite{Pok}. 
		
		First, we consider the $n$-th Fermat arrangement $\mathcal{L}_n $ of lines  in $\mathbb{P}^2$  with $n \geq 3$. 
		As noted above, this arrangement consists of $3n$ lines with $t_n=3$ and $t_3=n^2.$ Note that $\mathcal{L}_n$ satisfies Hirzebruch property. Using the above construction, we can pullback $\mathcal{L}_n$ to a curve arrangement  $\mathcal{C}_n'$ on $X_e$ with $e > n$. Then for the ample line bundle $L=C_0+(e+1)f$ on $X_e$ as in Example \ref{Hirz}, we get
		$$\varepsilon(X_e, L,{\rm Sing}(\mathcal{C}_n')) = \frac{L\cdot C_1}{e(n+1)}=\frac{e+1}{e(n+1)}.$$
		
		Next,  let $\mathcal{K} $ denote the Klein arrangement of lines consisting of 21 lines in $\mathbb{P}^2$ with $t_4=21$ and $t_3=28.$ Note that $\mathcal{K}$ satisfies Hirzebruch property. Using the above construction, we can pullback $\mathcal{K}$ to a curve arrangement  $\mathcal{K}'$ on $X_e$ with $e > n$ as in the above construction. Then for the ample line bundle $L$ on $X_e$ as in Example \ref{Hirz}, we get
		$$\varepsilon(X_e, L,{\rm Sing}(\mathcal{K}')) = \frac{L\cdot C_1}{e(n+1)}=\frac{e+1}{8e}.$$
		
		We now consider another arrangement satisfying Hirzebruch property, namely the Wiman arrangement of lines $\mathcal{W}$ in $\mathbb{P}^2$ consisting of 45 lines with $t_3=120$, $t_4=45$ and $t_5=36.$ Using the above construction, we can pullback $\mathcal{W}$ to a curve arrangement  $\mathcal{W}'$ on $X_e$ with $e > n$ as in the above construction. Then for the ample line bundle $L$ on $X_e$ as in Example \ref{Hirz}, we get
		$$\varepsilon(X_e, L,{\rm Sing}(\mathcal{W}')) = \frac{L\cdot C_1}{e(n+1)}=\frac{e+1}{16e}.$$
	\end{example}

	\subsection{Comparing the Seshadri constant and the configurational Seshadri constant} \label{compare}
	
	We now study possible discrepancies between the configurational and the multi-point Seshadri constants with some examples. It is clear that for an arrangement $\mathcal{C}$ on a smooth projective surface $X$ and an ample line bundle $L,$ we always have the following inequality: 
	$$\varepsilon_\mathcal{C}(L) \geq \varepsilon(X, L,{\rm Sing}(\mathcal{C})).$$
	In general, this inequality is strict. 
	\begin{example}
		In Corollary \ref{prop:star}, we considered a \emph{star} configuration $\mathcal{C} = \{C_{1}, \ldots, C_{d}\}$ with $d \geq 4$ such that $ C_i \in |A|$ for all $i$, where $A$ was a fixed divisor on $X$. We saw that for an ample line bundle $L$  on $X$, if 
		$\frac{dC_1^2}{(d-1)(L\cdot C_1)}L-C_1$ is nef, the Seshadri constant is given by
		$$\varepsilon(X, L,{\rm Sing}(\mathcal{C})) = \frac{L\cdot C_1}{C_1^2(d-1)}.$$
		
		On the other hand, the configurational Seshadri constant is given by
		$$	\varepsilon_\mathcal{C}(L)= \frac{d(L\cdot C_1)}{2t_2}= \frac{L\cdot C_1}{C_1^2(d-1)}.$$
		Thus in this case, both these constants agree.
	\end{example}
	
	\begin{example}
		In Example \ref{KCW}, we first considered a curve arrangement  $\mathcal{C}_n'$ on $X_e$ with $e > n$ obtained as a pullback of the $n$-th Fermat arrangement of lines $\mathcal{L}_n \subset \mathbb{P}^2$  with $n \geq 3.$ This arrangement $\mathcal{C}_n'$ consists of $3n$ curves with $t_n=3e$ and $t_3=en^2.$ Then for the ample line bundle $L=C_0+(e+1)f $ on $X_e$, we obtained the following value of Seshadri constant 
		$$\varepsilon(X_e, L,{\rm Sing}(\mathcal{C}_n')) = \frac{L\cdot C_1}{e(n+1)}=\frac{e+1}{e(n+1)}.$$
		
		On the other hand, the configurational Seshadri constant is given by
		$$	\varepsilon_{\mathcal{C}_n'}(L)= \frac{3n(L\cdot C_1)}{f_1}= \frac{e+1}{e(n+1)}.$$
		Thus, in this case, both these constants agree.
		
		Next, we considered a curve arrangement  $\mathcal{K}'$ on $X_e$ with $e > n$ which is a pullback of the Klein arrangement of lines $\mathcal{K} \subset \mathbb{P}^2.$ The curve arrangement  $\mathcal{K}'$ consists of 21 curves with $t_4=21e$ and $t_3=28e.$ Then by the choice of an ample line bundle $L$ on $X_e$ as in Example \ref{Hirz}, we obtained the following value of Seshadri constant 
		$$\varepsilon(X_e, L,{\rm Sing}(\mathcal{K}')) = \frac{e+1}{8e}.$$
		On the other hand, the configurational Seshadri constant is given by
		$$	\varepsilon_{\mathcal{K}'}(L)= \frac{21(L\cdot C_1)}{f_1}= \frac{e+1}{8e}.$$
		Thus, in this case too, both the constants agree.
		
		We then considered the curve arrangement  $\mathcal{W}'$ on $X_e$ with $e > n$ obtained  as a pullback of the Wiman arrangement of lines $\mathcal{W} \subset \mathbb{P}^2$. The curve arrangement $\mathcal{W}'$ consists of 45 curves with $t_3=120e$, $t_4=45e$ and $t_5=36e.$ Then by the choice of an ample line bundle $L$ on $X_e$ as in Example \ref{Hirz}, we get
		$$\varepsilon(X_e, L,{\rm Sing}(\mathcal{W}')) = \frac{L\cdot C_1}{e(n+1)}=\frac{e+1}{16e}.$$
		
		On the other hand, the configurational Seshadri constant is given by
		$$	\varepsilon_{\mathcal{W}'}(L)= \frac{45(L\cdot C_1)}{f_1}= \frac{e+1}{16e}.$$
		Thus, in this case too, both the constants agree.
		
	\end{example}
We give an example below where the Seshadri constant is strictly smaller than the configurational Seshadri constant. 
\begin{example}
In \cite[Example 3.1]{JP}, it was shown that for a Hirzebruch quasi-pencil $\mathcal{H}$ of $k\geq 4$ lines in $\mathbb{P}^2$ with $t_{k-1}=1$ and $t_2=k-1$, the multi-point Seshadri constant $\varepsilon(\mathbb{P}^2, \mathcal{O}_{\mathbb{P}^2}(1),{\rm Sing}(\mathcal{H}))$ is strictly smaller than its configurational value $\varepsilon_{\mathcal{H}}(\mathcal{O}_{\mathbb{P}^2}(1)).$ We can obtain a configuration $\mathcal{H}'$ on a rational ruled surface $X_e$ with $e=k+1$, as a pullback of the Hirzebruch quasi-pencil $\mathcal{H}$, using the construction mentioned above. The configuration $\mathcal{H}'$ consists of $k \geq 4$ curves say $C_{1}, \ldots, C_{k}$, with $t_{k-1}=e$, $t_2=(k-1)e$ and $ C_i \in |C_0+ef|$ for all $i.$ Let $L=C_0+(e+1)f$ be an ample line bundle on $X_e.$ 

In this case, the configurational Seshadri constant is 
	$$	\varepsilon_{\mathcal{H}'}(L)= \frac{k(L\cdot C_1)}{2e(k-1)+(k-1)e}= \frac{k(e+1)}{3e(k-1)}=\frac{k(k+2)}{3(k^2-1)}.$$

We now claim that 
	$$\varepsilon(X_e, L,{\rm Sing}(\mathcal{H}')) = \frac{e+1}{e(k-1)}=\frac{k+2}{k^2-1}.$$ 
	
	Indeed, we can assume that the points of multiplicity $k-1$ are defined by intersections of $C_1, \cdots, C_{k-1}$. Then the curve $C_k$ gives the Seshadri ratio $\frac{e+1}{e(k-1)}=\frac{k+2}{k^2-1}.$  Suppose that there exists an irreducible and reduced curve $D$, different from each $C_{i}$ for each $i$, having multiplicity $m_{p}(D)$ at each point $p \in \mathcal{P}={\rm Sing}(\mathcal{H}')$ such that
	\begin{equation*}
		\frac{L \cdot D}{\sum_{p \in \mathcal{P}} m_{p}(D)} < \frac{e+1}{e(k-1)}.		
\end{equation*}

Then 
$$D \cdot (C_1+C_k)=2 (D \cdot C_1) \geq \sum_{p \in \mathcal{P}} m_{p}(D) > \frac{e(k-1)(L \cdot D)}{e+1}.$$
Since $C_1$ is linearly equivalent to $L-f, $ we have $$2(D \cdot L) \geq 2(D \cdot C_{1})> \frac{e(k-1)(L \cdot D)}{e+1},$$ i.e., 
$2(k+2) > k^2-1.$ But this is not possible by the choice of $k$.

	This shows that $\varepsilon_{\mathcal{H}'}(L) > \varepsilon(X_e, L,{\rm Sing}(\mathcal{H}')).$
\end{example}

\subsection{Lower bounds on configurational Seshadri constants}\label{bounds}
We now give some lower bounds on configurational Seshadri constants on ruled surfaces and on surfaces of non-negative Kodaira dimension.	We will 
work with arrangements of curves satisfying Assumption \ref{star1}.


	\begin{theorem}\cite[Equation 4.9]{HS} \label{HTIn}
		Let $X$ be a ruled surface over a smooth curve of genus $g \geq 0$ with invariant $e
		\ge 4$.  Let $\mathcal{C} = \{C_{1}, \ldots, C_{d}\}$ be a transversal arrangement of curves
		satisfying Assumption \ref{star1} such that each curve in
		$\mathcal{C}$ is numerically equivalent to $A=aC_0+bf$ with $a > 0$
		and $b \ge ae$. Assume further that all the curves in $\mathcal{C}$ do not meet in a point and that 
  either $a\geq 2$ or if $a=1$, there exists a subset of four curves in $\mathcal{C}$ such that there is no point common to all the four curves. Then we have the following Hirzebruch-type inequality for $\mathcal{C}$:\\
		\begin{equation*}\label{eq:Hirzebruch type ineq}
			t_{2}+\frac{3}{4}t_{3} \geq -16+16g+\sum_{k\geq5}(2k-9)t_{k}+
			d(e(5a^2-2a)-10ab-4ag+4a+4b).\\
		\end{equation*}	
	\end{theorem}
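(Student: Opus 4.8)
The statement is \cite[Equation 4.9]{HS}; I outline the argument, which adapts Hirzebruch's method for line arrangements in $\mathbb{P}^2$ to the ruled surface $X=\proj_C(E)$. The plan is to apply a Bogomolov--Miyaoka--Yau (BMY) type inequality to the pair $(X,D)$, where $D=C_1+\dots+C_d$ is the reduced total arrangement, and then translate the resulting inequality of Chern numbers into a statement about the combinatorial invariants $t_k$.

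First I would record the numerical data on $X$. Using the standard relations $C_0^2=-e$, $C_0\cdot f=1$, $f^2=0$, the canonical class $K_X\equiv -2C_0+(2g-2-e)f$, and $c_2(X)=e(X)=4(1-g)$ (see \cite[Chapter V, Section 2]{Har}), together with $C_i\equiv A=aC_0+bf$, one computes $A^2=a(2b-ae)$, the intersection numbers $(K_X+D)^2$ and $K_X\cdot D$, and, via adjunction, the common genus of each smooth curve $C_i$. The hypothesis $b\ge ae$ guarantees that $A$ is nef, while the conditions $e\ge 4$ and $a\ge 2$ (or the four-curve condition when $a=1$), combined with the assumption that the $C_i$ have no common point, are designed to ensure that the logarithmic pair is of log-general type, so that the relevant BMY inequality is available.

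The heart of the argument is the computation of the logarithmic (orbifold) Chern numbers $\bar c_1^2$ and $\bar c_2$ of $(X,D)$ in terms of $(d,a,b,e,g)$ and the $t_k$. The boundary Euler characteristic $e(D)$ contributes the term $\sum_{k\ge 2}(k-1)t_k$, while the local orbifold corrections at the $k$-fold points (obtained by resolving each non-nodal singularity, or equivalently via Langer's local orbifold Euler numbers) produce the higher-weight terms; the B\'ezout relation $\sum_{k\ge 2}\binom{k}{2}t_k=\binom{d}{2}A^2$ from \eqref{eq:combinatorial general} links these two sums and, in particular, cancels the leading $d^2$-contributions. Feeding these into $\bar c_1^2\le 3\bar c_2$ and collecting terms yields the asserted inequality, in which the coefficient $\tfrac34$ of $t_3$ and the coefficients $2k-9$ of $t_k$ for $k\ge 5$ emerge from the precise local contributions.

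The main obstacle, and the step that must be carried out with care in \cite{HS}, is exactly this local orbifold Euler-number computation at the higher-multiplicity points: a naive single blow-up of each $k$-fold point combined with the logarithmic BMY inequality gives only $3t_2+\sum_{k\ge 3}(k^2-k+1)t_k\ge(\text{const})$, which is not sharp enough to produce the coefficients $2k-9$. One must instead use the optimal orbifold weights (equivalently, pass to a suitable branched abelian cover as in Hirzebruch's original derivation and apply BMY on the smooth cover). Verifying that the numerical hypotheses on $(e,a,b,g)$ guarantee applicability of the inequality, and checking that the resulting constant matches $-16+16g+d\big(e(5a^2-2a)-10ab-4ag+4a+4b\big)$, are then routine but lengthy calculations.
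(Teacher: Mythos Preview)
The paper does not supply a proof of this statement at all: Theorem~\ref{HTIn} is quoted verbatim from \cite[Equation~4.9]{HS} and is used as a black box in the subsequent lower bound for $\varepsilon_{\mathcal C}(L)$. So there is no ``paper's own proof'' to compare your attempt against.

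That said, your outline is the correct shape of the argument carried out in \cite{HS}: one resolves the singular points of $D=C_1+\cdots+C_d$, passes to an abelian cover (or equivalently uses orbifold/logarithmic Chern numbers), applies a Bogomolov--Miyaoka--Yau inequality on the resulting surface of general type, and then unwinds the Chern-number computation using the numerics of $K_X$, $c_2(X)$, $A=aC_0+bf$ and the combinatorial identity \eqref{eq:combinatorial general}. Your remark that a naive single blow-up plus logarithmic BMY is not sharp enough, and that one needs the Hirzebruch-style abelian cover (or Langer's optimal orbifold weights) to obtain the coefficients $\tfrac34$ and $2k-9$, is exactly right. The hypotheses $e\ge 4$, $b\ge ae$, and the ``no common point / four-curve'' clause are used in \cite{HS} precisely to guarantee that the cover is of general type so that BMY applies. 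If you intend this as a self-contained proof rather than a sketch, the missing work is the explicit Chern-number bookkeeping on the cover; as written it is an accurate road map, not a proof.
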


	Using the above result, we give a lower bound on configurational Seshadri constants.	
	\begin{theorem} Let $X$ be a ruled surface over a smooth curve of genus $g \geq 0$ with invariant $e
		\ge 4$.  
		Let
$\mathcal{C} = \{C_{1}, \ldots, C_{d}\}$		
		 be a transversal arrangement of curves satisfying hypothesis of Theorem \ref{HTIn} and let $L$ an ample line bundle on $X$. Then 
		$$\varepsilon_{\mathcal{C}}(
		L) \geq \frac{d(L\cdot C_1)}{8-8g+\frac{9(2ab-a^2e)d^2}{4}+\frac{d(2ae-\frac{a^2e}{2}+ab+4ag-4a-4b)}{2}}.$$
	\end{theorem}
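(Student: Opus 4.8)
The plan is to reduce the claimed lower bound to an explicit upper bound on the quantity $f_1 = \sum_{k\ge 2} k\,t_k$, and then to obtain that upper bound as a single nonnegative linear combination of the incidence identity \eqref{eq:combinatorial general} and the Hirzebruch-type inequality of Theorem \ref{HTIn}.

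First I would unwind the definition of $\varepsilon_{\mathcal{C}}(L)$. Since every $C_i$ is numerically equivalent to $A = aC_0 + bf$, we have $L\cdot C_i = L\cdot C_1$ for all $i$, so the numerator is $\sum_i L\cdot C_i = d(L\cdot C_1)$. A singular point of multiplicity $k$ contributes $\mathrm{mult}_p(\mathcal{C}) = k$, so the denominator is $\sum_{p}\mathrm{mult}_p(\mathcal{C}) = \sum_{k\ge 2} k\,t_k = f_1$. As the arrangement is connected with $d \ge 4$, we have $f_1 > 0$ and $L\cdot C_1 > 0$; hence $\varepsilon_{\mathcal{C}}(L) = d(L\cdot C_1)/f_1$, and the assertion is equivalent to the bound $f_1 \le 8 - 8g + \tfrac{9\gamma d^2}{4} + \tfrac{d\nu}{2}$, where I write $\gamma = A^2 = 2ab - a^2 e$ and $\nu = 2ae - \tfrac{a^2 e}{2} + ab + 4ag - 4a - 4b$.

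Next I would record the two relations among the $t_k$. Because $C_i\cdot C_j = A^2 = \gamma$ for $i\ne j$, \eqref{eq:combinatorial general} becomes the exact identity $\sum_{k\ge2}\binom{k}{2}t_k = \binom{d}{2}\gamma$, while Theorem \ref{HTIn} supplies $t_2 + \tfrac34 t_3 - \sum_{k\ge5}(2k-9)t_k \ge -16 + 16g + d\mu$ with $\mu = e(5a^2 - 2a) - 10ab - 4ag + 4a + 4b$. The heart of the argument is to combine these with weights $\beta = \tfrac92$ and $\alpha = \tfrac12$. I would first verify the elementary pointwise inequality $k \le \tfrac92\binom{k}{2} - \tfrac12 c_k$ for every $k\ge2$, where $c_2 = 1$, $c_3 = \tfrac34$, $c_4 = 0$, and $c_k = -(2k-9)$ for $k\ge5$; the only non-immediate case is $k\ge5$, where it reduces to $k(k-1)\ge2$. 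Multiplying by $t_k\ge0$ and summing gives $f_1 \le \tfrac92\binom{d}{2}\gamma - \tfrac12\big(t_2 + \tfrac34 t_3 - \sum_{k\ge5}(2k-9)t_k\big)$, and inserting the lower bound of Theorem \ref{HTIn} (which, being multiplied by $-\tfrac12 < 0$, flips into an upper bound) produces $f_1 \le \tfrac92\binom{d}{2}\gamma + 8 - 8g - \tfrac{d\mu}{2}$.

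The one genuine computation, and the step I expect to need care, is matching this against the target denominator. Expanding $\tfrac92\binom{d}{2}\gamma = \tfrac{9\gamma d^2}{4} - \tfrac{9\gamma d}{4}$, the $d^2$-coefficient $\tfrac{9\gamma}{4}$ and the constant term $8 - 8g$ agree with the target by the choices of $\beta$ and $\alpha$; what remains is the algebraic identity $-\tfrac92\gamma - \mu = \nu$ for the coefficient of $d/2$, a routine but slightly lengthy expansion in $a,b,e,g$. This establishes $f_1 \le 8 - 8g + \tfrac{9\gamma d^2}{4} + \tfrac{d\nu}{2}$, and dividing $d(L\cdot C_1)$ by this bound gives the stated inequality for $\varepsilon_{\mathcal{C}}(L)$.
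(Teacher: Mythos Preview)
Your proof is correct and follows essentially the same route as the paper: both reduce to bounding $f_1$ above by $8-8g+\tfrac{9}{2}\binom{d}{2}\gamma-\tfrac{d\mu}{2}$ via the Hirzebruch-type inequality together with the combinatorial identity $\sum_{k\ge2}\binom{k}{2}t_k=\binom{d}{2}\gamma$. The only cosmetic difference is that the paper carries out the estimate in two steps (rewriting the Hirzebruch inequality in terms of $f_0,f_1$, dropping the nonnegative slack $4t_2+\tfrac{9}{4}t_3+t_4$, and then using $f_0\le\binom{d}{2}\gamma$), whereas you fold these into a single pointwise inequality $k\le\tfrac{9}{2}\binom{k}{2}-\tfrac{1}{2}c_k$ with explicitly chosen weights $\beta=\tfrac{9}{2}$, $\alpha=\tfrac{1}{2}$; the resulting bound and the remaining algebraic check $-\tfrac{9}{2}\gamma-\mu=\nu$ are identical.
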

	\begin{proof}
		Our strategy is based on the combinatorial features of $\mathcal{C}$. Let us denote $C = C_{1} + \ldots + C_{d}$. Then we can write 
		\begin{equation}
			\varepsilon_\mathcal{C}(L)= \frac{\sum_{i=1}^{d}L\cdot C_i}{\sum_{P \in \text{Sing}(\mathcal{C}) }\text{mult}_P(\mathcal{C})}=\frac{d(L\cdot C_1)}{f_1}.
		\end{equation}	
		
		Our goal here is to find a reasonable upper-bound on the number $f_{1} = \sum_{r\geq 2} rt_{r}$. In order to do so, we are going to use Theorem \ref{HTIn} and Hirzebruch's inequality, namely
		\begin{equation*}\label{eq:Hirzebruch type ineq}
			t_{2}+\frac{3}{4}t_{3} \geq -16+16g+\sum_{k\geq5}(2k-9)t_{k}+
			d(e(5a^2-2a)-10ab-4ag+4a+4b).\\
		\end{equation*}	
		Simplifying this, we get
		\begin{equation*}\label{eq:4.7}
			16-16g+d(2ae-5a^2e+10ab+4ag-4a-4b)+9f_0 \geq2f_1+4t_2 +t_{4} + \frac{9}{4}t_{3}.
		\end{equation*}
		Since $t_{2}, t_{4},t_{3} \geq 0$ we have
		\begin{equation*}\label{eq:4.7}
			16-16g+d(2ae-5a^2e+10ab+4ag-4a-4b)+9f_0 \geq2f_1.
		\end{equation*}
		and hence
		\begin{equation*}\label{eq:4.7}
			8-8g+\frac{d}{2}(2ae-5a^2e+10ab+4ag-4a-4b)+\frac{9}{2}f_0 \geq f_1.
		\end{equation*}
		Obviously one always has $$d \leq f_{0} \leq (2ab-a^2e) {d \choose 2}$$ which leads to
		\begin{equation*}
			f_1 \leq 8-8g+\frac{9(2ab-a^2e)d^2}{4}+\frac{d(2ae-\frac{a^2e}{2}+ab+4ag-4a-4b)}{2},
		\end{equation*}
		so finally we get
		\begin{equation*}
			\varepsilon_\mathcal{C}(L)=\frac{d(L\cdot C_1)}{f_1} \geq \frac{d(L\cdot C_1)}{8-8g+\frac{9(2ab-a^2e)d^2}{4}+\frac{d(2ae-\frac{a^2e}{2}+ab+4ag-4a-4b)}{2}}.
		\end{equation*}		
	\end{proof}

	We will now consider transversal configurations on surfaces of non-negative Kodaira dimension. We will use the following theorem. 
	\begin{theorem}\cite[Theorem 2.1]{LP1} \label{LP}
		Let $X$ be a smooth complex projective surface with non-negative Kodaira dimension and let $C=C_1 + \cdots + C_d$ be a transversal configuration of smooth curves having $d\geq 2$ irreducible components $C_1, \cdots, C_d.$ Then
		\begin{equation*}
			K_X \cdot C + 4 \sum_{i=1}^{d} (1-g(C_i))-t_2+\sum_{r \geq 3} (r-4)t_r \leq 3c_2(X)-K_X^2.
		\end{equation*}
	\end{theorem}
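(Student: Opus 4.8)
The plan is to derive this as a logarithmic Bogomolov--Miyaoka--Yau (BMY) inequality for the pair $(X,C)$, the non-negative Kodaira dimension hypothesis being exactly what makes such an inequality available. Since $C=C_1+\cdots+C_d$ is transversal, its only singularities are ordinary $r$-fold points, so $C$ is already a normal crossing divisor away from the points of multiplicity $\ge 3$. First I would pass to $\sigma\colon \widetilde X\to X$ obtained by blowing up each point $p\in\mathrm{Sing}(\mathcal C)$ with $r_p\ge 3$, so that the strict transforms $\widetilde C_i$ together with the exceptional curves $E_p$ form a genuine simple normal crossing divisor $\widetilde D=\sum_i\widetilde C_i+\sum_{r_p\ge3}E_p$ on $\widetilde X$. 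Equivalently, one may work directly on $X$ with Langer's local orbifold Euler numbers, the exceptional contributions below being repackaged as those local terms.

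Next I would record the two log Chern numbers of $(\widetilde X,\widetilde D)$, namely $\overline c_1^{\,2}=(K_{\widetilde X}+\widetilde D)^2$ and $\overline c_2=e(\widetilde X\setminus\widetilde D)=c_2(\widetilde X)-e(\widetilde D)$, and express them in the combinatorial data of $\mathcal C$. Adjunction gives $C_i^2=2g(C_i)-2-K_X\cdot C_i$; the incidence identity \eqref{eq:combinatorial general} gives $\sum_{i<j}C_i\cdot C_j=\sum_{r\ge2}\binom r2 t_r$; and inclusion--exclusion for the topological Euler characteristic of a nodal curve gives $e(C)=\sum_i(2-2g(C_i))-\sum_{r\ge2}(r-1)t_r$. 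Feeding in the effect of the blow-ups (so $K_{\widetilde X}=\sigma^\ast K_X+\sum E_p$, $c_2$ increases by one per blow-up, and each $E_p$ is a $(-1)$-curve meeting exactly $r_p$ strict transforms) and simplifying, both sides become linear expressions in $K_X\cdot C$, $K_X^2$, $c_2(X)$, $\sum_i(1-g(C_i))$ and the $t_r$.

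Then I would invoke the BMY inequality $\overline c_1^{\,2}\le 3\,\overline c_2$ for the log pair and substitute the formulas from the previous step. The bookkeeping of the exceptional divisors is precisely what converts the ``raw'' self-intersection coefficient of $t_r$ into the value $r-4$ for $r\ge3$ and $-1$ for $r=2$, producing the asserted inequality $K_X\cdot C+4\sum_i(1-g(C_i))-t_2+\sum_{r\ge3}(r-4)t_r\le 3c_2(X)-K_X^2$. Here the nodes of multiplicity $2$, which are left unblown, are what account for the isolated $-t_2$ term.

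The hard part is not the bookkeeping but securing the BMY inequality itself for the log pair. The clean statement $\overline c_1^{\,2}\le 3\,\overline c_2$ requires positivity (for instance $K_{\widetilde X}+\widetilde D$ big and nef, or semistability of the logarithmic cotangent sheaf), and this is exactly where the hypothesis of non-negative Kodaira dimension enters: it guarantees that $K_X$ is pseudo-effective, so that, via Miyaoka's generic semipositivity of $\Omega^1_X$ together with its logarithmic and orbifold refinements due to Sakai and Langer, the required inequality holds. Making this step unconditional --- controlling the local orbifold Euler numbers at the multiple points sharply enough to recover exactly the coefficients $r-4$ --- is the technical core, and is the content one would import from \cite{LP1}, ultimately from Langer's orbifold BMY theorem, rather than reprove by hand.
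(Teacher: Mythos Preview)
The paper does not prove this theorem at all: it is quoted verbatim from \cite[Theorem 2.1]{LP1} and used as a black box in the subsequent result bounding configurational Seshadri constants. There is no ``paper's own proof'' to compare against.

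That said, your sketch is essentially the argument in the cited source. Laface and Pokora do derive the inequality via the logarithmic Bogomolov--Miyaoka--Yau inequality for the pair $(X,C)$, using Langer's orbifold version so that one can work directly on $X$ with the local orbifold Euler numbers at the singular points of $C$ rather than passing to an explicit log resolution. The non-negative Kodaira dimension hypothesis is invoked exactly as you say, to ensure the positivity needed for the log BMY inequality to hold. Your bookkeeping outline (adjunction for $C_i^2$, the combinatorial identity for $\sum_{i<j}C_i\cdot C_j$, inclusion--exclusion for $e(C)$, and the effect of blowing up $r$-fold points) is the right computation, and the emergence of the coefficients $-1$ at $r=2$ and $r-4$ for $r\ge 3$ is precisely the output of that calculation. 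You are also correct that the substantive content lies in Langer's theorem, not in the linear algebra; this is why the present paper simply cites the result.
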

	We now give a lower bound on configurational Seshadri constants on surfaces of non-negative Kodaira dimension.
 
	\begin{theorem}
		Let $X$ be a smooth complex projective surface with non-negative Kodaira dimension and let $C=C_1 + \cdots + C_d$ be a transversal configuration of smooth curves satisfying Assumption \ref{star1}. Assume further that all the curves in $\mathcal{C}$ do not meet in a point. 
  Let $L$ be an ample line bundle on $X$. Then
		$$\varepsilon_{\mathcal{C}}(
		L) \geq \frac{d(L\cdot C_1)}{3c_2(X)-K_X^2+4C_1^2 {d \choose 2}-K_X \cdot C-4 \sum_{i=1}^{d} (1-g(C_i))}.$$
	\end{theorem}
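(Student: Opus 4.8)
The plan is to reduce the entire statement to producing a single upper bound on the quantity $f_1 = \sum_{p \in \mathrm{Sing}(\mathcal{C})} \mathrm{mult}_p(\mathcal{C})$. Indeed, since the arrangement satisfies Assumption \ref{star1}, every $L \cdot C_i$ equals $L \cdot C_1$, so by definition the configurational Seshadri constant is $\varepsilon_{\mathcal{C}}(L) = \frac{\sum_i L\cdot C_i}{f_1} = \frac{d(L\cdot C_1)}{f_1}$, where the numerator $d(L\cdot C_1)$ is a fixed positive quantity. Thus any inequality of the form $f_1 \le D$ with $D > 0$ immediately yields $\varepsilon_{\mathcal{C}}(L) \ge d(L\cdot C_1)/D$, and the whole task is to show that the displayed denominator serves as such a $D$.

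First I would feed the configuration into Theorem \ref{LP}, which applies because $C = C_1 + \cdots + C_d$ is a transversal configuration of smooth curves on a surface of non-negative Kodaira dimension. Rearranging its conclusion to isolate the purely combinatorial terms gives
\[
-t_2 + \sum_{r \ge 3}(r-4)t_r \le 3c_2(X) - K_X^2 - K_X\cdot C - 4\sum_{i=1}^d (1-g(C_i)).
\]
I would then invoke the elementary identity $f_1 - 4f_0 = \sum_{r\ge 2}(r-4)t_r = -2t_2 + \sum_{r\ge 3}(r-4)t_r$, which lets me rewrite the left-hand side as $(f_1 - 4f_0) + t_2$. Discarding the nonnegative term $t_2$ produces the clean bound
\[
f_1 - 4f_0 \le 3c_2(X) - K_X^2 - K_X\cdot C - 4\sum_{i=1}^d (1-g(C_i)).
\]

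Next I would control $f_0 = \sum_{k\ge 2} t_k$ using the incidence equation \eqref{eq:combinatorial general}. Under Assumption \ref{star1} each pairwise intersection number is $C_i \cdot C_j = A^2 = C_1^2$, so $\sum_{k\ge 2}\binom{k}{2}t_k = \binom{d}{2}C_1^2$; since $\binom{k}{2}\ge 1$ for every $k \ge 2$, this forces $f_0 \le \binom{d}{2}C_1^2$. Substituting $4f_0 \le 4C_1^2\binom{d}{2}$ into the previous inequality yields exactly
\[
f_1 \le 3c_2(X) - K_X^2 + 4C_1^2\binom{d}{2} - K_X\cdot C - 4\sum_{i=1}^d (1-g(C_i)),
\]
and dividing $d(L\cdot C_1)$ by this bound gives the asserted lower bound on $\varepsilon_{\mathcal{C}}(L)$.

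The computation itself is mechanical, so the only point genuinely requiring care is the direction of the final division: passing from $f_1 \le D$ to $\varepsilon_{\mathcal{C}}(L) \ge d(L\cdot C_1)/D$ is valid only when $D$ is positive, which is also precisely the regime in which the stated inequality carries content. I expect this positivity to be the sole subtlety; the standing hypotheses—non-negativity of the Kodaira dimension and the requirement that the curves in $\mathcal{C}$ do not all meet in a single point—are what guarantee that $C$ is a genuine transversal configuration to which Theorem \ref{LP} applies and that the combinatorial bounds used above are the intended ones.
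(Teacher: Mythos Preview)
Your proposal is correct and follows essentially the same route as the paper: rewrite $\varepsilon_{\mathcal{C}}(L)=d(L\cdot C_1)/f_1$, apply Theorem~\ref{LP}, rearrange to $f_1-4f_0+t_2\le 3c_2(X)-K_X^2-K_X\cdot C-4\sum(1-g(C_i))$, drop $t_2\ge 0$, bound $f_0\le C_1^2\binom{d}{2}$ via \eqref{eq:combinatorial general}, and divide. Your explicit remark about needing the denominator to be positive is a caveat the paper leaves implicit.
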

	\begin{proof}
		As before, we find an upper-bound on the number $f_{1} = \sum_{r\geq 2} rt_{r}$. In order to do so, we are going to use Theorem \ref{LP} and Hirzebruch's inequality, namely
		\begin{equation*}
			K_X \cdot C + 4 \sum_{i=1}^{d} (1-g(C_i))-t_2+\sum_{r \geq 3} (r-4)t_r \leq 3c_2(X)-K_X^2.
		\end{equation*}
		Simplifying, we get 
		\begin{equation*}
			K_X \cdot C + 4 \sum_{i=1}^{d} (1-g(C_i))+t_2+f_1-4f_0\leq 3c_2(X)-K_X^2.
		\end{equation*}	
		Since $t_2 \geq 0,$ we have 
		\begin{equation*}
			K_X \cdot C + 4 \sum_{i=1}^{d} (1-g(C_i))+f_1-4f_0\leq 3c_2(X)-K_X^2
		\end{equation*}	
		and hence 
		\begin{equation*}
			f_1\leq 3c_2(X)-K_X^2+4f_0-K_X \cdot C-4 \sum_{i=1}^{d} (1-g(C_i)).
		\end{equation*}	
		We know that
			$d \leq f_0 \leq C_1^2 {d \choose 2}$. So 
		\begin{equation*}
			f_1\leq 3c_2(X)-K_X^2+4C_1^2 {d \choose 2}-K_X \cdot C-4 \sum_{i=1}^{d} (1-g(C_i)). 
		\end{equation*}	
		Finally we obtain
		\begin{equation*}
			\varepsilon_\mathcal{C}(L)=\frac{d(L\cdot C_1)}{f_1} \geq \frac{d(L\cdot C_1)}{3c_2(X)-K_X^2+4C_1^2 {d \choose 2}-K_X \cdot C-4 \sum_{i=1}^{d} (1-g(C_i))}.
		\end{equation*}		
	\end{proof}

We end with an example of a configuration of lines on a K3 surface in $\mathbb{P}^3$. In this example, the nefness hypothesis of Theorem \ref{main-theorem} is not satisfied, but the conclusion holds. 

\begin{example}\label{K3}
		Let $X \subset \mathbb{P}^3$ be a Fermat quartic defined by the vanishing locus of $x_0^4 + x_1^4 + x_2^4 + x_3^4$, where $x_0, \ldots, x_3$ are the coordinates of $\mathbb{P}^3$. 
		Then $X$ contains the following 48 lines; 
  16 from each of the following three types: 
		\begin{eqnarray*}
			A: \quad x_0 = \alpha x_1 \; \text{and} \; x_2 = \beta x_3, \\
			A': \quad x_0 = \alpha x_2 \; \text{and} \; x_1 = \beta x_3, \\
			A'': \quad x_0 = \alpha x_3 \; \text{and} \; x_1 = \beta x_2,
		\end{eqnarray*}
		where $\alpha, \beta \in \{\zeta, -\zeta, i\zeta, -i\zeta \}$ and $\zeta$ is a primitive eighth root of unity. 

Now let $\mathcal{L} = \{l_1,l_2, \ldots,  l_{48}\}$ be the arrangement consisting of all these 48 lines. 
Intersection behaviour of lines on a Fermat quartic is classically well-known. 
In particular, it is known that the only singular points for the arrangement $\mathcal{L}$ are either double points or quadruple points; \cite[Example 3.3]{Pok1}. It is also not difficult to see that the number of singular points on any line $l$ in $\mathcal{L}$ is 10. Of these, two are quadruple points obtained by intersecting $l$ with lines of the same type. The remaining eight points are double points obtained by intersecting $l$ with lines of different types. 


The hyperplane section $X \cap V(x_0 - \zeta x_1)$ is the union of the four lines of type $A$ in $\mathcal{L}$ 
given by  
$x_0 = \zeta x_1 \; \text{and} \; x_2 = \beta_j x_3$
for  $1\leq j \leq 4$. Consequently, the sum of these four lines is linearly equivalent to $\mathcal{O}_X(1)$. Similarly, the sum of the four lines 
$x_0 = \alpha_i x_1 \; \text{and} \; x_2 = \zeta x_3$, 
for $1\leq i \leq 4$, 
is linearly equivalent to $\mathcal{O}_X(1)$. Thus we note that the line 
$x_0 = \zeta x_1 \; \text{and} \; x_2 = \zeta x_3$ is in two different sets of 4 lines each so that the sum of lines in each set is linearly equivalent to $\mathcal{O}_X(1)$.
Using a similar argument, one can see that there are 24 sets of 4 lines each such that each line in $\mathcal{L}$ appears in \textit{exactly} two of the sets and the sum of the 4 lines in each set is linearly equivalent to $\mathcal{O}_X(1)$.



		
		We now compute the multi-point Seshadri constant of $\mathcal{O}_X(1)$ at the singular locus of 
		the arrangement $\mathcal{L}$.  This arrangement has 216 singular points. So we have 
		$$\varepsilon(X, \mathcal{O}_X(1), \text{Sing}(\mathcal{L})) \le \sqrt{\frac{4}{216}} \sim 0.136.$$
In fact, we claim that
		\begin{equation}\label{2.22}
		\varepsilon(X, \mathcal{O}_X(1), \text{Sing}(\mathcal{L})) = \frac{1}{10}.
	\end{equation}

Note that Theorem \ref{main-theorem} predicts that the Seshadri constant is indeed $\frac{1}{10}$, though  it cannot be applied here. This is because 
$d=48, b_i = 10$ and $L\cdot l_i = 1$ for every $i$. So the divisor which appears in the statement of Theorem \ref{main-theorem}
is not nef, as can be seen by intersecting with $l_i$.

  To prove \eqref{2.22}, we first show that the Seshadri ratio computed by a line in $\mathcal{L}$ is $\frac{1}{10}$. 
   As mentioned above, every line in $\mathcal{L}$ contains exactly 10 singular points of $\mathcal{L}$. 
   Note that each line $l$ in $\mathcal{L}$ has degree 1 in $\mathbb{P}^3$.
   So for every line $l \in \mathcal{L}$, 
		\[
		\frac{\mathcal{O}_X(1)\cdot l}{ \sum\limits_{p \in\text{Sing}(\mathcal{L})} \text{mult}_{p} \,l} 
		= \frac{1}{10}.
		\]
  
		Now let $C \subset X$ be any reduced and irreducible curve such that $C \notin \mathcal{L}$ and which passes through some point in the singular locus of 
		$\mathcal{L}$. We will show that the Seshadri ratio given by $C$ is strictly bigger than $\frac{1}{10}$. 
  
  By B\'ezout's theorem, we have the following inequality for every line $l \in \mathcal{L}$:
		$$2 C \cdot l \ge 2\left( \sum_{p \in l \,\cap \,\text{Sing} (\mathcal{L})} \text{mult}_p C \right).$$
  
We now sum such inequalities over all the 48 lines and then group the lines into sets of four lines as described above, so that the sum of the four lines in each set is linearly equivalent to $\mathcal{O}_X(1)$.  Therefore we get
               \begin{eqnarray*}
                \sum\limits_{i=1}^{48} 2 C \cdot l_i &\ge& \sum\limits_{i=1}^{48} 2\left( \sum_{p \in l_i \,\cap \,\text{Sing} (\mathcal{L})} \text{mult}_p C \right) \\
                \Rightarrow  \sum\limits_{i=1}^{24} C \cdot \mathcal{O}_X(1) &\ge&  2\left( \sum\limits_{i=1}^{48}\sum_{p \in l_i \,\cap \,\text{Sing} (\mathcal{L})} \text{mult}_p C \right).
               \end{eqnarray*}

Since each double point appears in two lines and each quadruple point appears in four lines, we obtain 
		\begin{eqnarray*}
				24 \text{deg}(C) &\geq& 2 \left(2\sum\limits_{p_i \in \text{Sing}_2(\mathcal{L})} \text{mult}_{p_i} C ~+~ 4\sum\limits_{q_j \in \text{Sing}_4(\mathcal{L})} \text{mult}_{q_j}C \right) \\
		 &\geq& 4 \sum\limits_{p \in\text{Sing}(\mathcal{L})} \text{mult}_{p}C.
		\end{eqnarray*}
		%

Here $\text{Sing}_2(\mathcal{L})$ and $\text{Sing}_4(\mathcal{L})$ denote the set of double points and the set of quadruple points of the arrangement $\mathcal{L}$, respectively. Therefore we get 
		\[
		\frac{\mathcal{O}_X(1) \cdot C}{\sum\limits_{p \in \text{Sing}(\mathcal{L})} \text{mult}_{p}C} = \frac{\text{deg}(C)}{ \sum\limits_{p \in\text{Sing}(\mathcal{L})} \text{mult}_{p}C} \geq \frac{1}{6} > \frac{1}{10}.
		\]
		This completes the proof of the equality \eqref{2.22}.

	\end{example}

 \subsection*{Acknowledgements} The first author is partially supported by a grant from Infosys Foundation. 
	
\subsection*{Declaration of interests}
The authors declare that they have no known competing financial interests or personal relationships
that could have appeared to influence the work reported in this paper.

	\bibliographystyle{plain}

\begin{thebibliography}{100}
	\bibitem{B} T. Bauer, Seshadri constants on algebraic surfaces, Math. Ann. \textbf{313} (1999), no.3, 547--583.
	\bibitem{CV} E. Carlini, A. Van Tuyl, Star configuration points and generic plane curves, Proc. Amer. Math. Soc. \textbf{139} (2011), no.12, 4181--4192.
	\bibitem{E} S. Eterovi\'c, Logarithmic Chern slopes of arrangements of rational sections in Hirzebruch surfaces,
		Master Thesis, Pontificia Universidad Cat\'olica de Chile, Santiago 2015.
		\bibitem{GHM} A. V. Geramita, B. Harbourne, J. Migliore, Star configurations in $\mathbb{P}^n$, J. Algebra, \textbf{376} (2013), 279--299.
		\bibitem{HM} K. Hanumanthu, A. Mukhopadhyay, {\it Multi-point Seshadri constants on ruled surfaces}, Proc. Amer. Math. Soc. \textbf{145} (2017), no. 12, 5145--5155.
		\bibitem{HS} K. Hanumanthu, A. Subramaniam, Bounded negativity and Harbourne constants on ruled surfaces, Manuscripta Math. \textbf{164} (2021) 431--454. 
		\bibitem{HR} B. Harbourne, J. Ro\'e,
Discrete behavior of Seshadri constants on surfaces, J. Pure Appl. Algebra \textbf{212} (2008), no.3, 616--627.
		\bibitem{Har} R. Hartshorne, {\it Algebraic geometry},
		Springer-Verlag, New York, 1977.
		\bibitem{JP}M. Janasz and P. Pokora, On Seshadri constants and point-curve configurations, Electron. Res. Arch., \textbf{28(2)} (2020)  795--805.

		
			\bibitem{LP1} R. Laface and P. Pokora, 
		Local negativity of surfaces with non-negative Kodaira dimension and transversal configurations of curves, 
Glasg. Math. J. \textbf{62} (2020), no.1, 123--135.	
		\bibitem{Nag} M. Nagata, On the 14-th problem of Hilbert, Am. J. Math. \textbf{81} (1959), 766--772.
\bibitem{Pok1}	P. Pokora, Harbourne constants and arrangements of lines on smooth hypersurfaces in
$\mathbb{P}^3$,  Taiwanese J. Math., \textbf{20}, No. 1 (2016), 25--31.  
		
		\bibitem{Pok} P. Pokora, Seshadri constants and special configurations of points in the projective plane,
		Rocky Mountain J. Math. \textbf{49(3)} (2019) 963 -- 978.
		\bibitem{PRS} P. Pokora, X. Roulleau and T. Szemberg, Bounded negativity, Harbourne constants and transversal
		arrangements of curves, Ann. Inst. Fourier (Grenoble) \textbf{67} (2017), no. 6, 2719–2735.
		\bibitem{PR} P. K. Roy,
Some results on Seshadri constants on surfaces of general type,  Eur. J. Math. \textbf{6} (2020), no.4, 1176--1190.
		\bibitem{RR} J. Ro\'e, J. Ross, 
An inequality between multipoint Seshadri constants, Geom. Dedicata \textbf{140} (2009), 175--181.
		\bibitem{SS} W. Syzdek, T. Szemberg, Seshadri fibrations of algebraic surfaces, Math. Nachr. \textbf{283} (2010),
no. 6, 902--908.
\bibitem{U} G. A. Urz\'ua,  Arrangements of curves and algebraic surfaces,  Thesis (Ph.D.) University of
Michigan, 2008, 166 pages.
	\end{thebibliography}

\end{document}